\title[Qualitative Uncertainty Principle for Gabor transform]{Qualitative Uncertainty Principle for Gabor transform}
\numberwithin{equation}{section}
\theoremstyle{definition}
\newtheorem{thm}{\sc Theorem}[section]
\newtheorem{lem}[thm]{\sc Lemma}
\newtheorem{rem}[thm]{\sc Remark}
\newtheorem{defi}[thm]{\sc Definition}
\newcommand{\R}{\mathbb{R}}
\newcommand{\N}{\mathbb{N}}
\newcommand{\C}{\mathbb{C}}
\newcommand{\ch}{\mathcal{H}}
\DeclareMathOperator*{\tr}{tr}
\begin{document}
\begin{abstract}
We discuss the Qualitative Uncertainty Principle for Gabor transform on certain classes of the locally compact groups, like abelian groups, $\R^n\times K$, $K \ltimes \R^n$ where $K$ is compact group. We shall also prove a weaker version of Qualitative Uncertainty Principle for Gabor transform in case of compact groups.
\end{abstract}
\author[A. Bansal]{ASHISH BANSAL}
\address{Department of Mathematics, Keshav Mahavidyalaya (University of Delhi), H-4-5 Zone, Pitampura, Delhi, 110034, India.}
\email{abansal@keshav.du.ac.in}
\author[A. Kumar]{AJAY KUMAR}
\address{Department of Mathematics, University of Delhi, Delhi, 110007, India.}
\email[Corresponding author]{akumar@maths.du.ac.in}
\keywords{Qualitative uncertainty principle, Fourier transform, Continuous Gabor transform, Reproducing Kernel Hilbert space}
\subjclass[2010]{Primary 43A30; Secondary 22D99; 22E25}
\maketitle
\section{Introduction}
\noindent Let $G$ be a second countable, unimodular, locally compact group of type I with the dual space $\widehat{G}$. Let $m$ denote the Haar measure on $G$ and $\mu$ denote the Plancherel measure on $\widehat{G}$. For $f \in L^1(G)$, the Fourier transform $\widehat{f}$ is defined as the operator 
\begin{flalign*}
\widehat{f}(\gamma)=\int_{G}{f(x)\ \gamma(x)^\ast}\ dm_G(x).
\end{flalign*}
Let us define 
\begin{flalign*}
A_f=\{x\in G : f(x)\neq 0\}\ \text{and}\ B_{\widehat{f}}=\{\gamma \in \widehat{G} : \widehat{f}(\gamma)\neq 0\}.
\end{flalign*} 
Uncertainty principles have been studied extensively in the past fifty years. Although there is a variety of uncertainty principles, the common idea communicated by them is that a non-zero function and its Fourier transform cannot both be sharply localized. The qualitative uncertainty principle for Fourier transform can be stated as follows: 
\begin{flalign*}
\text{If $f\in L^1(G)$ satisfies $m(A_f)<\infty$ and $\mu(B_f)<\infty$, then $f=0$ a.e.}
\end{flalign*}
\noindent The QUP for $\R^n$ was proved by Benedicks \cite{Ben:85}. The principle has been generalized for several classes for locally compact groups by Echterhoff, Kaniuth and Kumar \cite{Ech:Kan:Kum:91}. For more details, refer to the survey \cite{Fol:Sit:97}.\\
The representation of $f$ as a function of $x$ is usually called \textit{time-representation}, whereas the representation of the Fourier transform $\hat{f}$ as a function of $\omega$ is called \textit{frequency-representation}. The Fourier transform is commonly used for analyzing the frequency properties of a given signal. After transforming a signal using Fourier transform, the information about time is lost and it is hard to tell where a certain frequency occurs. This problem can be countered by using \textit{joint time-frequency representation}, i.e., Gabor transform. It uses a window function to localize the Fourier transform, then shift the window to another position, and so on. This property of the Gabor transform provides the local aspect of the Fourier transform with time resolution equal to the size of the window.\\
Let $\psi \in L^2(\R)$ be a fixed non-zero function usually called a \textit{window function}. The Gabor transform of a function $f\in L^2(\R)$ with respect to the window function $\psi$ is defined by
\begin{flalign*}
G_\psi f : \R \times \widehat{\R} \rightarrow \C
\end{flalign*}
such that
\begin{flalign*}
G_\psi f(t,\omega)=\int_{\R}{f(x)\ \overline{\psi(x-t)}\ e^{-2\pi i \omega x}}\ dx,
\end{flalign*}
for all $(t,\omega)\in \R \times \widehat{\R}$. \\
In \cite{Wil:00}, it has been proved that for $f\in L^2(\R)\setminus \{0\}$ and a window function $\psi$, the support of $G_\psi f$ is a set of infinite Lebesgue measure.\\
The continuous Gabor transform for second countable, unimodular and type I group has been defined in \cite{Far:Kam:12}. A brief description is given in section 2. We will be interested in the following so called qualitative uncertainty principle for Gabor transform:
\begin{flalign*}
\text{If $f\in L^2(G)$ and $\psi$ is a window function satisfying\ \ \ }\\
\text{$(m\times \mu)(\{(x,\gamma):G_\psi f(x,\gamma)\neq 0\})<\infty$, then $f=0$ a.e.} 
\end{flalign*}
In section 3, we shall prove a necessary and sufficient condition for a second countable, locally compact, abelian group to have QUP. In section 4, for a second countable, locally compact, unimodular, type I group $G$ and a closed, normal subgroup $H$ of $G$ such that $G/H$ is compact, we prove that if $H$ has QUP, then so does $G$. In the last section, we shall prove the necessary and sufficient condition for a weaker form of QUP for Gabor transform to be true for a compact group $G$.
\section{Continuous Gabor Transform} 
\noindent Let $G$ be a second countable, unimodular group of type I. Let $dx$ denotes the Haar measure on $G$ and $d\pi$ the Plancherel measure on $\widehat{G}$. For each $(x,\pi)\in G\times \widehat{G}$, we define
\begin{flalign*}
\ch_{(x,\pi)}=\pi(x)\text{HS}(\ch_\pi),
\end{flalign*}
where $\pi(x)\text{HS}(\ch_\pi)=\{\pi(x)T:T\in \text{HS}(\ch_\pi)\}$ and $\ch_{(x,\pi)}$ forms a Hilbert space with the inner product given by
\begin{flalign*}
\langle \pi(x)T,\pi(x)S\rangle_{\ch_{(x,\pi)}}=\tr{(S^\ast T)}=\langle T,S\rangle_{\text{HS}(\ch_\pi)}.
\end{flalign*}
Also, $\ch_{(x,\pi)}=\text{HS}(\ch_\pi)$ for all $(x,\pi)\in G\times \widehat{G}$. The family $\{\ch_{(x,\pi)}\}_{(x,\pi)\in G\times \widehat{G}}$ of Hilbert spaces indexed by $G\times \widehat{G}$ is a field of Hilbert spaces over $G\times \widehat{G}$. Let $\ch^2(G \times \widehat{G})$ denote the direct integral of $\{\ch_{(x,\pi)}\}_{(x,\pi)\in G\times \widehat{G}}$ with respect to the product measure $dx\ d\pi$, i.e., the space of all measurable vector fields $F$ on $G\times \widehat{G}$ such that
\begin{flalign*}
\|F\|_{\ch^2(G\times \widehat{G})}^2=\int_{G\times \widehat{G}}{\|F(x,\pi)\|_{(x,\pi)}^2}\ dx\ d\pi<\infty.
\end{flalign*}
It can be easily verified that $\ch^2(G \times \widehat{G})$ forms a Hilbert space with the inner product given by
\begin{flalign*}
\langle F,K\rangle_{\ch^2(G\times \widehat{G})}=\int_{G\times \widehat{G}}{\tr{[F(x,\pi)K(x,\pi)^\ast]}}\ dx\ d\pi.
\end{flalign*}
Let $f\in C_c(G)$, the set of all continuous complex-valued functions on $G$ with compact supports and $\psi$ a fixed non-zero function in $L^2(G)$ usually called window function. For $(x,\pi) \in G\times \widehat{G}$, the continuous \textit{Gabor Transform} of $f$ with respect to the window function $\psi$ can be defined as a measurable field of operators on $G\times \widehat{G}$ by
\begin{flalign}
&&G_\psi f(x,\pi)&:=\int_{G}{f(y)\ \overline{\psi(x^{-1}y)}\ \pi(y)^\ast}\ dy. \label{opvalued}&
\end{flalign}
The operator-valued integral \eqref{opvalued} is considered in the weak-sense, i.e., for each $(x,\pi) \in G\times \widehat{G}$ and $\xi, \eta\in \ch_\pi$, we have
\begin{flalign*}
\langle G_\psi f(x,\pi)\xi,\eta\rangle &=\int_{G}{f(y)\ \overline{\psi(x^{-1}y)}\ \langle\pi(y)^\ast \xi,\eta\rangle}\ dy. 
\end{flalign*}
For each $x\in G$, define $f^\psi_{x}:G \rightarrow \C$ by
\begin{flalign}
f^\psi_x(y):=f(y)\ \overline{\psi(x^{-1}y)}.  \label{function}
\end{flalign}
Since, $f \in C_c(G)$ and $\psi \in L^2(G)$, we have $f^\psi_x\in L^1(G)\cap L^2(G)$, for all $x \in G$. The Fourier transform is given by
\begin{flalign*}
&&\widehat{f^\psi_x}(\pi)&=\int_{G}{f^\psi_x(y)\ \pi(y)^\ast}\ dy =\int_{G}{f(y)\ \overline{\psi(x^{-1}y)}\ \pi(y)^\ast}\ dy =G_\psi f(x,\pi). &
\end{flalign*}
Also, using Plancherel theorem \cite[Theorem $7.44$]{Fol:94}, we see that $\widehat{f^\psi_x}(\pi)$ is a Hilbert-Schmidt operator for almost all $\pi\in \widehat{G}$. Therefore, $G_\psi f(x,\pi)$ is a Hilbert-Schmidt operator for all $x \in G$ and for almost all $\pi\in \widehat{G}$. As in \cite{Far:Kam:12}, for $f \in C_c(G)$ and a window function $\psi \in L^2(G)$, we have
\begin{flalign}
\|G_\psi f\|_{\ch^2(G\times \widehat{G})} =\|\psi\|_2\ \|f\|_2. \label{GT-norm}
\end{flalign}
It means that the continuous Gabor transform $G_\psi : C_c(G) \rightarrow \ch^2(G\times \widehat{G})$ defined by $f \mapsto G_\psi f$ is a multiple of an isometry. So, we can extend $G_\psi$ uniquely to a bounded linear operator from $L^2(G)$ into a closed subspace $H$ of $\ch^2(G\times \widehat{G})$ which we still denote by $G_\psi$ and this extension satisfies \eqref{GT-norm} for each $f\in L^2(G)$. It follows from \cite{Ban:Kum:15(2)} that for $f \in L^2(G)$ and a window function $\psi \in L^2(G)$, we have $G_\psi f(x,\pi)=\widehat{f^\psi_x}(\pi)$.
\section{QUP for Gabor transform}
\noindent In this section $G$ will be second countable, locally compact, abelian group with Haar measure $m$. Let $\widehat{G}$ be the dual group with Plancherel measure $\mu$. Before discussing the QUP for Gabor transform on $G$, we shall first establish some important properties of Gabor transform.  
\begin{lem}\label{invariance}
For $f\in L^2(G)$ and a window function $\psi$, we have
\renewcommand{\labelenumi}{(\roman{enumi})}
\begin{enumerate}
\item $G_\psi({}_{x_0}f)(x,\gamma)=\gamma(x_0)\ G_\psi f(x_0x,\gamma)$, for $x_0,x\in G$ and $\gamma \in \widehat{G}$.
\item $G_\psi(\sigma f)(x,\gamma)=G_\psi f(x,\sigma^{-1}\gamma)$, for $x\in G$ and $\sigma,\gamma \in \widehat{G}$.
\end{enumerate}
\end{lem}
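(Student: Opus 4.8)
The plan is to reduce both identities to the standard behaviour of the group Fourier transform under translation and modulation, via the pointwise description $G_\psi f(x,\gamma)=\widehat{f^\psi_x}(\gamma)$ recorded at the end of Section~2, which is valid for every $f\in L^2(G)$ (with $f^\psi_x$ as in \eqref{function}, which lies in $L^1(G)$ as a product of two $L^2$ functions, so that $\widehat{f^\psi_x}$ is defined). Since $G$ is abelian, $\widehat{G}$ consists of unitary characters, $\widehat{g}(\gamma)=\int_G g(y)\,\overline{\gamma(y)}\,dm(y)$, and I shall use repeatedly that $|\gamma(x_0)|=1$, $\overline{\gamma(x_0)}=\gamma(x_0)^{-1}$, that $\overline{\sigma(y)}\,\gamma(y)=(\sigma^{-1}\gamma)(y)$, and that $m$ is translation invariant.

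For part~(i), with the translation convention ${}_{x_0}f(y)=f(x_0 y)$ (the one under which the stated formula holds), a one-line computation from \eqref{function} shows $({}_{x_0}f)^\psi_x={}_{x_0}\!\bigl(f^\psi_{x_0 x}\bigr)$, since $\psi\bigl(x^{-1}x_0^{-1}(x_0 y)\bigr)=\psi(x^{-1}y)$. Applying the Fourier transform together with the elementary formula $\widehat{{}_{x_0}g}(\gamma)=\gamma(x_0)\,\widehat{g}(\gamma)$ (obtained by substituting $z=x_0 y$) yields
\[
G_\psi({}_{x_0}f)(x,\gamma)=\widehat{({}_{x_0}f)^\psi_x}(\gamma)=\gamma(x_0)\,\widehat{f^\psi_{x_0 x}}(\gamma)=\gamma(x_0)\,G_\psi f(x_0 x,\gamma).
\]
For part~(ii), modulation touches only the $f$-factor, so \eqref{function} gives $(\sigma f)^\psi_x=\sigma\cdot f^\psi_x$; combined with $\widehat{\sigma g}(\gamma)=\widehat{g}(\sigma^{-1}\gamma)$ this gives $G_\psi(\sigma f)(x,\gamma)=\widehat{(\sigma f)^\psi_x}(\gamma)=\widehat{f^\psi_x}(\sigma^{-1}\gamma)=G_\psi f(x,\sigma^{-1}\gamma)$.

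Both computations are completely transparent when $f\in C_c(G)$, where one may alternatively change variables directly inside the defining integral \eqref{opvalued}; the only point that deserves a sentence is that for general $f\in L^2(G)$ the Gabor transform is a priori only an element of $\ch^2(G\times\widehat{G})$, so the asserted equalities are understood to hold $(m\times\mu)$-a.e. This causes no difficulty: it follows at once from the pointwise formula above, or, if one wishes to invoke only \eqref{GT-norm}, by density, since translation $f\mapsto{}_{x_0}f$ and modulation $f\mapsto\sigma f$ are isometries of $L^2(G)$, the maps $F(x,\gamma)\mapsto\gamma(x_0)F(x_0 x,\gamma)$ and $F(x,\gamma)\mapsto F(x,\sigma^{-1}\gamma)$ are isometries of $\ch^2(G\times\widehat{G})$ (Haar measure on $G$ and Plancherel measure on $\widehat{G}$ being translation invariant), and $G_\psi$ is bounded, so both identities pass from $C_c(G)$ to $L^2(G)$ in the limit. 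I do not expect a genuine obstacle here: the lemma is essentially bookkeeping, and the only things to watch are the a.e.\ qualification and fixing a consistent translation convention.
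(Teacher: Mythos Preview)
Your proof is correct and is essentially the paper's own argument: the paper simply carries out the change of variables directly inside the defining integral \eqref{opvalued}, while you factor the same computation through $G_\psi f(x,\gamma)=\widehat{f^\psi_x}(\gamma)$ and the standard translation/modulation formulas for $\widehat{\ \cdot\ }$. Your added remark on the a.e.\ qualification and the density extension to $L^2(G)$ is a point the paper leaves implicit.
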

\begin{proof}
\renewcommand{\labelenumi}{(\roman{enumi})}
\begin{enumerate}
\item For $x_0,x\in G$ and $\gamma \in \widehat{G}$, we have
\begin{flalign*}
&&G_\psi({}_{x_0}f)(x,\gamma)&=\int_{G}{f(x_0y)\ \overline{\psi(x^{-1}y)}\ \gamma(y^{-1})}\ dm(y) &\\
&&&=\int_{G}{f(y)\ \overline{\psi(x^{-1}x_0^{-1}y)}\ \gamma(y^{-1}x_0)}\ dm(y) &\\
&&&=\gamma(x_0)\int_{G}{f(y)\ \overline{\psi((x_0x)^{-1}y)}\ \gamma(y^{-1})}\ dm(y) &\\
&&&=\gamma(x_0)\ G_\psi f(x_0x,\gamma). &
\end{flalign*}
\item For $x\in G$ and $\sigma,\gamma \in \widehat{G}$, we observe that
\begin{flalign*}
&&G_\psi(\sigma f)(x,\gamma)&=\int_{G}{(\sigma f)(y)\ \overline{\psi(x^{-1}y)}\ \gamma(y^{-1})}\ dm(y) &\\
&&&=\int_{G}{f(y)\ \overline{\psi(x^{-1}y)}\ (\sigma^{-1}\gamma)(y^{-1})}\ dm(y) &\\
&&&=G_\psi f(x,\sigma^{-1}\gamma). &
\end{flalign*}
\end{enumerate}
\end{proof}
\begin{defi}
If $\ch$ be a Hilbert space of $\C$-valued functions defined on a non-empty set $X$. A function $k: X \times X \rightarrow \C$ is called a \textit{reproducing kernel} of $\ch$ if it satisfies
\renewcommand{\labelenumi}{(\roman{enumi})}
\begin{enumerate}
\item $k_x \in \ch$, for all $x \in X$, where $k_x(y)=k(y,x)$ for all $y\in X$.
\item $\langle{f,k_x}\rangle_{\ch}=f(x)$, for all $x \in X$ and $f \in \ch$. 
\end{enumerate}
\end{defi}
\noindent One can easily verify that if reproducing kernel of $\ch$ exists, then it is unique.
\begin{defi}
A Hilbert space $\ch$ is a reproducing kernel Hilbert space (r.k.H.s.) if the evaluation functionals $F_t: \ch \rightarrow \C$ given by $F_t(f)=f(t)$ for all $f \in \ch$, are bounded. 
\end{defi}
\noindent We can observe that a Hilbert space $\ch$ is a r.k.H.s. if and only if $\ch$ has a reproducing kernel.
\noindent Let $\psi$ be a window function. Then, we define
\begin{flalign*}
G_\psi(L^2(G)) =\{G_\psi f : f \in L^2(G)\}\subseteq L^2(G\times \widehat{G}).
\end{flalign*}
This space satisfies a very important property as shown in the following lemma:
\begin{lem}\label{r.k.h.s.-GT}
$G_\psi(L^2(G))$ is a r.k.H.s. with pointwise bounded kernel.
\end{lem}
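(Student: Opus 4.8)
The plan is to show that for each fixed $(t,\gamma)\in G\times\widehat G$ the evaluation functional $F_{(t,\gamma)}\colon G_\psi(L^2(G))\to\C$, $F_{(t,\gamma)}(G_\psi f)=G_\psi f(t,\gamma)$, is bounded, with a bound independent of $(t,\gamma)$; boundedness gives the r.k.H.s.\ structure and the uniform bound gives the pointwise-bounded kernel. First I would recall from Section~2 that $G_\psi f(t,\gamma)=\widehat{f^\psi_t}(\gamma)$ where $f^\psi_t(y)=f(y)\overline{\psi(t^{-1}y)}$. For an abelian $G$ the Fourier transform at a single character $\gamma$ is just $\widehat{f^\psi_t}(\gamma)=\int_G f^\psi_t(y)\overline{\gamma(y)}\,dm(y)$, so by Cauchy--Schwarz
\begin{flalign*}
|G_\psi f(t,\gamma)|=\left|\int_G f(y)\,\overline{\psi(t^{-1}y)}\,\overline{\gamma(y)}\,dm(y)\right|\le \|f\|_2\,\|\psi(t^{-1}\cdot)\|_2=\|f\|_2\,\|\psi\|_2,
\end{flalign*}
using translation-invariance of the Haar measure. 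Since $\|G_\psi f\|_{L^2(G\times\widehat G)}=\|\psi\|_2\|f\|_2$ by \eqref{GT-norm}, this reads $|F_{(t,\gamma)}(G_\psi f)|\le \|G_\psi f\|_{L^2(G\times\widehat G)}$, so each evaluation functional has norm at most $1$, uniformly in $(t,\gamma)$.

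Next I would assemble these facts into the kernel. Because $G_\psi$ is a multiple of an isometry from $L^2(G)$, the range $G_\psi(L^2(G))$ is a closed subspace of $L^2(G\times\widehat G)$, hence itself a Hilbert space; the previous paragraph shows every point evaluation on it is bounded, so by the standard Riesz-representation argument there is for each $(t,\gamma)$ a unique $k_{(t,\gamma)}\in G_\psi(L^2(G))$ with $\langle G_\psi f,k_{(t,\gamma)}\rangle=G_\psi f(t,\gamma)$ for all $f$, and $\|k_{(t,\gamma)}\|=\|F_{(t,\gamma)}\|\le 1$. Setting $k\big((t,\gamma),(s,\sigma)\big)=k_{(s,\sigma)}(t,\gamma)=\langle k_{(s,\sigma)},k_{(t,\gamma)}\rangle$ gives the reproducing kernel, and by Cauchy--Schwarz $|k\big((t,\gamma),(s,\sigma)\big)|\le\|k_{(s,\sigma)}\|\,\|k_{(t,\gamma)}\|\le 1$, i.e.\ the kernel is pointwise bounded. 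This proves the lemma.

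There is one subtlety I should address rather than a genuine obstacle: to speak of point evaluation I must work with the specific function representative $(t,\gamma)\mapsto \widehat{f^\psi_t}(\gamma)$ of the $L^2$-class $G_\psi f$, and check it is a genuine (everywhere-defined, not merely a.e.) function of $(t,\gamma)$ on which the above estimate holds for \emph{every} point. For abelian $G$ this is immediate since the integral defining $\widehat{f^\psi_t}(\gamma)$ converges absolutely for every $(t,\gamma)$ (as $f^\psi_t\in L^1(G)$ by the Cauchy--Schwarz bound $\|f^\psi_t\|_1\le\|f\|_2\|\psi\|_2$) and defines a continuous function there; so the pointwise bound is honest and the evaluation functionals are well defined on this canonical representative. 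The only mildly delicate point is verifying that this canonical representative agrees a.e.\ with the $L^2$-extension of $G_\psi$, which is exactly the content of the identity $G_\psi f(x,\pi)=\widehat{f^\psi_x}(\pi)$ quoted at the end of Section~2; with that in hand the argument is complete.
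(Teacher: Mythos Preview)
Your proof is correct and reaches the same conclusion as the paper, but the organization differs. The paper writes down the kernel explicitly as
\[
K_\psi(x',\gamma',x,\gamma)=\|\psi\|_2^{-2}\,\langle\psi_{(x',\gamma')},\psi_{(x,\gamma)}\rangle_{L^2(G)},\qquad \psi_{(x,\gamma)}(y)=\psi(x^{-1}y)\gamma(y),
\]
checks that $K_\psi^{(x',\gamma')}=G_\psi\bigl(\|\psi\|_2^{-2}\psi_{(x',\gamma')}\bigr)$ lies in the range, and then verifies the reproducing property via the polarized form of \eqref{GT-norm}, namely $\langle G_\psi f,G_\psi g\rangle=\|\psi\|_2^2\langle f,g\rangle$; the pointwise bound $|K_\psi|\le 1$ follows from Cauchy--Schwarz applied to $\langle\psi_{(x',\gamma')},\psi_{(x,\gamma)}\rangle$. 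You instead bound each evaluation functional directly by Cauchy--Schwarz on the defining integral, combine this with the norm identity \eqref{GT-norm} to get $\|F_{(t,\gamma)}\|\le 1$, and then invoke Riesz representation abstractly. Your route is slightly leaner---no explicit kernel formula is needed, and only the norm version of \eqref{GT-norm} rather than its polarization---and you are more careful than the paper about the passage from $L^2$-equivalence classes to honest pointwise values. The paper's route has the minor bonus of an explicit kernel expression, though that formula is not used elsewhere.
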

\begin{proof}
Define $K_\psi : (G \times \widehat{G})\times (G \times \widehat{G}) \rightarrow \C$ by
\begin{flalign*}
K_\psi(x',\gamma',x,\gamma)=\dfrac{1}{\|\psi\|_2^2}\ \langle{\psi_{(x',\gamma')},\psi_{(x,\gamma)}}\rangle_{L^2(G)},
\end{flalign*} 
where $\psi_{(x,\gamma)}(y)=\psi(x^{-1}y)\ \gamma(y)$, and let 
\begin{flalign*}
K_\psi^{(x',\gamma')}(x,\gamma)=K_\psi(x',\gamma',x,\gamma).
\end{flalign*}
For all $(x',\gamma')\in G\times \widehat{G}$, we have
\begin{flalign*}
&&K_\psi^{(x',\gamma')}(x,\gamma)&=\dfrac{1}{\|\psi\|_2^2}\ \int_{G}{\psi_{(x',\gamma')}(y)\ \overline{\psi_{(x,\gamma)}(y)}}\ dy &\\
&&&=\dfrac{1}{\|\psi\|_2^2}\ \int_{G}{\psi_{(x',\gamma')}(y)\ \overline{\psi(x^{-1}y)}\ \gamma(y^{-1})}\ dy &\\
&&&=G_{\psi}\left(\dfrac{1}{\|\psi\|_2^2}\ \psi_{(x',\gamma')}\right)(x,\gamma) &\\
&&&=G_{\psi}g(x,\gamma), &
\end{flalign*} 
where $g=\dfrac{1}{\|\psi\|_2^2}\ \psi_{(x',\gamma')}\in L^2(G)$. So $K_\psi^{(x',\gamma')}=G_{\psi}g \in G_\psi(L^2(G))$.\\
For all $(x',\gamma') \in G\times \widehat{G}$ and $f \in L^2(G)$, we have
\begin{flalign*}
&&\langle{G_\psi f,K_\psi^{(x',\gamma')}}\rangle_{L^2(G\times \widehat{G})}&=\dfrac{1}{\|\psi\|_2^2}\int_{G\times \widehat{G}}{G_\psi f(x,\gamma)\ \overline{\langle{\psi_{(x',\gamma')},\psi_{(x,\gamma)}}\rangle_{L^2(G)}}}\ dy &\\
&&&=\dfrac{1}{\|\psi\|_2^2}\int_{G\times \widehat{G}}{G_\psi f(x,\gamma)\ \overline{G_\psi (\psi_{(x',\gamma')})(x,\gamma)}}\ dy &\\
&&&=\langle{f,\psi_{(x',\gamma')}}\rangle_{L^2(G)}=G_{\psi}f(x',\gamma'). &
\end{flalign*}
Thus, $G_\psi(L^2(G))$ is a r.k.H.s. with reproducing kernel $K_\psi$ satisfying
\begin{flalign*}
|K_\psi(x',\gamma',x,\gamma)|&=\dfrac{1}{\|\psi\|_2^2}\ |\langle{\psi_{(x',\gamma')},\psi_{(x,\gamma)}}\rangle_{L^2(G)}| &\\
&\leq \dfrac{1}{\|\psi\|_2^2}\ \|\psi_{(x',\gamma')}\|\ \|\psi_{(x,\gamma)}\| = \dfrac{1}{\|\psi\|_2^2}\ \|\psi\|_2\ \|\psi\|_2 =1.&
\end{flalign*} 
Hence, the reproducing kernel is pointwise bounded by $1$.
\end{proof}
\begin{thm}\label{supp-GT}
Let $G$ be a second countable, locally compact, abelian group. If $f\in L^2(G)$ and $\psi$ is a window function, then QUP for Gabor transform holds if and only if the identity component $G_0$ of $G$ is non-compact.
\end{thm}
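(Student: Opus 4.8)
Here is how I would attack Theorem~\ref{supp-GT}.

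\smallskip

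\emph{The direction ``QUP holds $\Rightarrow G_0$ non-compact'' I would prove by contraposition.} Assume $G_0$ is compact. Then $G$ has a compact open subgroup $U$: the quotient $G/G_0$ is a second countable, totally disconnected, locally compact abelian group, hence by van Dantzig's theorem it has a compact open subgroup, whose preimage in $G$ is open, contains $G_0$, and is compact (being an extension of a compact group by the compact group $G_0$); equivalently, the structure theorem gives $G\cong\R^{k}\times G_1$ with $G_1$ carrying a compact open subgroup, and compactness of $G_0$ forces $k=0$. Now take $f=\psi=\mathbf{1}_U\in L^2(G)$. Since $xU=U$ for $x\in U$ and $xU\cap U=\varnothing$ for $x\notin U$, one has $f^\psi_x(y)=\mathbf{1}_U(y)\,\overline{\mathbf{1}_U(x^{-1}y)}=\mathbf{1}_U(x)\,\mathbf{1}_U(y)$, so $G_\psi f(x,\gamma)=\mathbf{1}_U(x)\,\widehat{\mathbf{1}_U}(\gamma)=m(U)\,\mathbf{1}_U(x)\,\mathbf{1}_{U^{\perp}}(\gamma)$, using $\widehat{\mathbf{1}_U}=m(U)\,\mathbf{1}_{U^{\perp}}$ for a compact open subgroup. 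Hence $\{G_\psi f\neq 0\}=U\times U^{\perp}$, of $(m\times\mu)$-measure $m(U)\,\mu(U^{\perp})<\infty$ because $U$ is compact and, $U$ being open, $U^{\perp}$ is a compact open subgroup of $\widehat{G}$; as $f=\mathbf{1}_U\neq 0$, QUP for the Gabor transform fails.

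\smallskip

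\emph{For the converse}, assume $G_0$ is non-compact, so the structure theorem gives $G\cong\R^{n}\times H$ with $n\geq 1$ and $H$ carrying a compact open subgroup. Let $f\in L^2(G)$ and let $\psi$ be a window with $(m\times\mu)(\Sigma)<\infty$, where $\Sigma:=\{(x,\gamma):G_\psi f(x,\gamma)\neq 0\}$; we must show $f=0$. Since $f^\psi_x\in L^1(G)\cap L^2(G)$ for every $x$, the function $G_\psi f(x,\gamma)=\widehat{f^\psi_x}(\gamma)$ is jointly continuous and, being bounded by $\|\psi\|_2\|f\|_2$ on the finite-measure set $\Sigma$ (Lemma~\ref{r.k.h.s.-GT}), lies in $L^1\cap L^2\cap L^\infty(G\times\widehat{G})$; hence its Fourier transform over the group $G\times\widehat{G}$ is continuous. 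A direct computation gives that this Fourier transform has modulus $(\sigma,t)\mapsto|\widehat{\psi}(\sigma)|\,|f(t)|$, so $\supp\bigl(\widehat{G_\psi f}\bigr)=A_{\widehat{\psi}}\times A_f$ (up to the natural reflection and null sets), and $\widehat{\psi}$ and $f$ agree a.e.\ with continuous functions vanishing at infinity.

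\smallskip

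\emph{The main step reduces to lower-dimensional uncertainty principles.} Since $n\geq 1$, $G$ satisfies the QUP for the Fourier transform \cite{Ech:Kan:Kum:91}. Fubini applied to $(m\times\mu)(\Sigma)<\infty$ yields $\mu\bigl(B_{\widehat{f^\psi_x}}\bigr)<\infty$ for a.e.\ $x$; applying the Fourier QUP on $G$ to $f^\psi_x$, whose support is $A_f\cap xA_\psi$, gives for a.e.\ $x$ that $f^\psi_x=0$ or $m(A_f\cap xA_\psi)=\infty$, and the symmetric argument applied to the slices $G_\psi f(\cdot,\gamma)$, whose Fourier transforms equal $\sigma\mapsto\widehat{f}(\gamma+\sigma)\,\overline{\widehat{\psi}(\sigma)}$, gives the companion statement for $\mu\bigl((A_{\widehat f}-\gamma)\cap A_{\widehat{\psi}}\bigr)$. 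In particular, if $m(A_\psi)<\infty$ then $m(A_f\cap xA_\psi)<\infty$ for all $x$, forcing $f^\psi_x=0$ a.e., hence $G_\psi f=0$ and $f=0$ by \eqref{GT-norm}; symmetrically $\mu(A_{\widehat{\psi}})<\infty$ forces $f=0$. For the remaining windows, with $m(A_\psi)=\mu(A_{\widehat{\psi}})=\infty$, I would use the free $\R$-factor: write $G=\R\times G'$ with $G'=\R^{n-1}\times H$, view $f$ and $\psi$ as $L^2(G')$-valued functions of the first variable, and observe that for a.e.\ $(x',\gamma')\in G'\times\widehat{G'}$ the map $(s,\tau)\mapsto G_\psi f(s,x',\tau,\gamma')$ is the Hilbert-space-valued short-time Fourier transform on $\R$ of $u\mapsto f(u,\cdot)$ against the window $u\mapsto M_{\gamma'}T_{x'}\psi(u,\cdot)\in L^2(G')$; by Fubini this transform has finite planar support for a.e.\ $(x',\gamma')$. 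The Gabor-transform QUP on $\R$ \cite{Wil:00}, in its vector-valued form, then forces the corresponding $\R$-variable signal to vanish; running Fubini in $(u,x',\gamma')$ and using injectivity of the Gabor transform on $G'$ gives $f(u,\cdot)=0$ in $L^2(G')$ for a.e.\ $u$, i.e.\ $f=0$.

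\smallskip

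The step I expect to be hardest is this last one for a \emph{non-tensor} window. When $\psi=\psi_1\otimes\psi_2$ one gets cleanly $G_\psi f(\cdot,x',\cdot,\gamma')=V_{\psi_1}\!\bigl(G_{\psi_2}^{G'}(f(\cdot,\cdot))(x',\gamma')\bigr)$ with the scalar window $\psi_1$, so \cite{Wil:00} applies verbatim; but for general $\psi$ the window in the $\R$-variable is $L^2(G')$-valued, and one needs a vector-valued extension of Wilczok's theorem — which should follow by the same reproducing-kernel-with-unit-diagonal mechanism underlying Lemma~\ref{r.k.h.s.-GT}, supplemented by the analyticity input available on $\R$ — or else an approximation argument reducing an arbitrary window to tensor windows while controlling supports through the explicit kernel $K_\psi$. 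Everything else — the two reductions via the Fourier-transform QUP, and the passages through \eqref{GT-norm} and Fubini — is routine.
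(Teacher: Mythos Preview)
Your contrapositive for ``QUP $\Rightarrow G_0$ non-compact'' is correct and is exactly what the paper does: build a compact open subgroup $U$, take $f=\psi=\chi_U$, and compute that $G_\psi f$ is supported on $U\times U^\perp$.

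The forward implication is where your proposal diverges from the paper, and where it carries a genuine gap. Your plan is to peel off an $\R$-factor via the structure theorem and, for each fixed $(x',\gamma')\in G'\times\widehat{G'}$, treat the slice $(s,\tau)\mapsto G_\psi f\bigl((s,x'),(\tau,\gamma')\bigr)$ as an STFT on $\R$. For tensor windows this slice is a scalar STFT and \cite{Wil:00} applies verbatim; but for a general $\psi$ the slice equals
\[
\int_\R\bigl\langle f(u,\cdot),\,M_{\gamma'}T_{x'}\psi(u-s,\cdot)\bigr\rangle_{L^2(G')}\,\overline{\tau(u)}\,du,
\]
a vector-valued STFT $V_\Phi F$ with $F(u)=f(u,\cdot)$ and $\Phi(w)=M_{\gamma'}T_{x'}\psi(w,\cdot)$. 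The obstacle you flag is real, not cosmetic: one computes $\|V_\Phi F\|_2^2=\iint|\langle F(u),\Phi(w)\rangle|^2\,du\,dw$, so $V_\Phi$ is in general neither injective nor a multiple of an isometry, and the reproducing-kernel-with-bounded-diagonal mechanism behind Lemma~\ref{r.k.h.s.-GT} and Wilczok's argument is simply unavailable for this operator. Your fallback of approximating $\psi$ by tensor windows does not help either, since $\{G_\psi f\neq 0\}$ is not stable under $L^2$-approximation of $\psi$. Thus the main case --- a generic window with $m(A_\psi)=\mu(A_{\widehat\psi})=\infty$ --- remains unproved, and your preliminary reductions via the Fourier QUP, while correct, only dispose of degenerate windows.

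The paper avoids all of this by never leaving $G$ and never splitting on the window. It works directly in the r.k.H.s.\ $G_\psi(L^2(G))$ furnished by Lemma~\ref{r.k.h.s.-GT}. Supposing a nonzero $F_0=G_\psi f$ is supported in a finite-measure set $M\subseteq G\times\widehat G$, it applies Hogan's perturbation lemma \cite[Proposition~1]{Hog:93} --- valid precisely when the identity component is non-compact, here for $G\times\widehat G$ --- to choose $a^{(k)}\in (G\times\widehat G)_0$ so that the translates $a^{(k-1)}\cdots a^{(1)}\{F_0\neq 0\}$ all sit inside a single set $S$ with $(m\times\mu)(S)<(m\times\mu)(M)+\epsilon$, while each successive union is strictly larger. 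The covariance relations of Lemma~\ref{invariance} then show that the corresponding time-frequency shifts of $f$ yield elements $F_k\in G_\psi(L^2(G))$ with $\{F_k\neq 0\}\subseteq S$, and the strict growth of the supports forces the family $\{F_k\}$ to be linearly independent. This contradicts \cite[Lemma~3.1]{Wil:00}: in a r.k.H.s.\ with pointwise bounded kernel, the functions supported in a fixed finite-measure set span only a finite-dimensional space. Non-compactness of $G_0$ enters solely through Hogan's lemma, and no structure theorem, no reduction to $\R$, and no case analysis on $\psi$ are needed.
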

\begin{proof}
Suppose that $G$ has non-compact identity component $G_0$. \\
Let $f\in L^2(G)\setminus \{0\}$ be arbitrary. In order to show that the measure of the set $\{(x,\gamma):G_\psi f(x,\gamma) \neq 0\}$ is infinite, it suffices to show that for arbitrary set $M \subseteq G\times \widehat{G}$ of finite measure, we have
\begin{flalign}
G_\psi(L^2(G))\cap \{F\in L^2(G\times \widehat{G}) : F=\chi_M \cdot F\}=\{0\}. \label{main}
\end{flalign}
Let us assume, on the contrary, that there exists a non-trivial function $F_0$ such that for arbitrary set $M \subseteq G\times \widehat{G}$ of finite measure, we have
\begin{flalign*}
F_0 \in G_\psi(L^2(G))\cap \{F\in L^2(G\times \widehat{G}) : F=\chi_M \cdot F\}.
\end{flalign*}
Let $\epsilon >0$ be arbitrary and $M_0=\{(x,\gamma): F_0(x,\gamma) \neq 0\}\subseteq M$. Since $(m\times \mu)(M_0)>0$, by \cite[Proposition 1]{Hog:93} there exists $a^{(1)} \in (G\times \widehat{G})_0$ such that
\begin{flalign*}
(m\times \mu)(M)<(m\times \mu)(M\cup a^{(1)}M_0) <(m\times \mu)(M)+\dfrac{\epsilon}{2},
\end{flalign*} 
where $(G\times \widehat{G})_0=G_0\times (\widehat{G})_0$ denotes the identity component of $G\times \widehat{G}$. Then, we can write
\begin{flalign*}
a^{(1)}=(y^{(1)},\sigma^{(1)}),\ \text{where}\ y^{(1)}\in G_0,\ \sigma^{(1)}\in (\widehat{G})_0
\end{flalign*}
and 
\begin{flalign*}
a^{(1)}M_0=\{(y^{(1)}x,\sigma^{(1)}\gamma):(x,\gamma)\in M_0\}.
\end{flalign*}
Define
\begin{flalign*}
M_1:=M,\ \ M_2:=M\cup a^{(1)}M_0.
\end{flalign*}
Since $0<(m\times \mu)(M_2)<\infty$ and $a^{(1)}M_0 \subseteq M_2$ with $(m\times \mu)(a^{(1)}M_0)>0$, there exists $a^{(2)}=(y^{(2)},\sigma^{(2)})\in G_0\times (\widehat{G})_0$ such that
\begin{flalign*}
(m\times \mu)(M_2)<(m\times \mu)(M_2\cup a^{(2)}a^{(1)}M_0) <(m\times \mu)(M_2)+\dfrac{\epsilon}{2^2}.
\end{flalign*} 
Proceeding in this way, we get an increasing sequence $\{M_k\}_{k\geq 2}$ given by
\begin{flalign*}
M_k:=M_{k-1}\cup a^{(k-1)}\cdots a^{(2)}a^{(1)}M_0,
\end{flalign*}
where $a^{(j)}=(y^{(j)},\sigma^{(j)})\in G_0\times (\widehat{G})_0$ for all $j=1,2,\ldots,k-1$ satisfying
\begin{flalign}
&&(m\times \mu)(M_{k-1})<(m\times \mu)(M_k) &<(m\times \mu)(M_{k-1})+\dfrac{\epsilon}{2^{k-1}}. &\label{Mk}
\end{flalign} 
Let us now define
\begin{flalign*}
S=\bigcup_{k=1}^{\infty}{M_k}.
\end{flalign*}
\begin{flalign*}
\text{Then,}\, (m\times \mu)(S)&=\lim_{k\rightarrow \infty}{(m\times \mu)(M_k)}&\\*
&\leq \lim_{k\rightarrow \infty}{\left[(m\times \mu)(M_{k-1})+\dfrac{\epsilon}{2^{k-1}}\right]}&\\
&\leq \lim_{k\rightarrow \infty}{\left[(m\times \mu)(M)+\dfrac{\epsilon}{2}+\cdots+\dfrac{\epsilon}{2^{k-1}}\right]}&\\
&=(m\times \mu)(M)+\lim_{k\rightarrow \infty}{\left[\sum_{i=1}^{k-1}{\dfrac{\epsilon}{2^i}}\right]}&\\
&=(m\times \mu)(M)+\epsilon<\infty. &
\end{flalign*}
Consider the family $\{F_k\}_{k \in \N}$ of functions on $G \times \widehat{G}$ defined as follows:
\begin{flalign*}
F_1(x,\gamma):&=F_0(x,\gamma), \\
F_k(x,\gamma):&=\gamma((y^{(k-1)})^{-1}) F_{k-1}((y^{(k-1)})^{-1}x,(\sigma^{(k-1)})^{-1}\gamma),\ \text{for} \ k>2.
\end{flalign*}
We first show that $F_k\in G_\psi(L^2(G))$, for all $k\in \N$. This is proved by induction on $k$. For $k=1$, the result is trivially true. \\
Assume that $F_{k-1}=G_\psi(g_{k-1})$, for some $g_{k-1}\in L^2(G)$.\\
Then, using Lemma \ref{invariance}, we can write
\begin{flalign*}
&&F_k(x,\gamma)&=\gamma((y^{(k-1)})^{-1})\ G_\psi(g_{k-1})((y^{(k-1)})^{-1}x,(\sigma^{(k-1)})^{-1}\gamma) &\\
&&&=\gamma((y^{(k-1)})^{-1})\ G_\psi(\sigma^{(k-1)}g_{k-1})((y^{(k-1)})^{-1}x,\gamma) &\\
&&&=G_\psi({}_{(y^{(k-1)})^{-1}}(\sigma^{(k-1)}g_{k-1}))(x,\gamma) &\\
&&&=G_\psi(g_k)(x,\gamma), &
\end{flalign*}
where $g_k={}_{(y^{(k-1)})^{-1}}(\sigma^{(k-1)}g_{k-1})\in L^2(G)$ as $g_{k-1}\in L^2(G)$.
\begin{flalign*}
\text{Also,}\, \{(x,\gamma): F_k(x,\gamma) \neq 0\}&=\{(x,\gamma): F_{k-1}((a^{(k-1)})^{-1}(x,\gamma)) \neq 0\}&\\
&=\{a^{(k-1)}(y,\sigma): F_{k-1}(y,\sigma) \neq 0\}&\\
&=a^{(k-1)}\cdots a^{(2)}a^{(1)}\{(x,\gamma): F_0(x,\gamma) \neq 0\} &\\
&=a^{(k-1)}\cdots a^{(2)}a^{(1)}M_0 \subseteq M_k\subset S.
\end{flalign*}
Next we claim that the family $\{F_k\}_{k\geq 2}$ is linearly independent. Assume that there exists $k>2$ such that $F_k=\displaystyle\sum_{j=2}^{k-1}{b_jF_j}$, where $b_2,b_3,\ldots,b_{k-1}\in \C$ are suitably chosen constants. Then
\begin{flalign*}
a^{(k-1)}\cdots a^{(2)}a^{(1)}M_0&=\{(x,\gamma): F_k(x,\gamma) \neq 0\} &\\
&\subseteq \bigcup_{j=2}^{k-1}{\{(x,\gamma): F_j(x,\gamma) \neq 0\}} &\\
&=(a^{(1)}M_0)\cup (a^{(2)}a^{(1)}M_0) \cup \ldots \cup (a^{(k-1)}\cdots a^{(2)}a^{(1)}M_0) &\\
&\subseteq M_{k-1}, &
\end{flalign*}
which implies that $M_k=M_{k-1}$, which contradicts \eqref{Mk}.

\noindent Therefore, $\{F_k\}_{k\geq 2}$ is an infinite set of linearly independent functions with $\{(x,\gamma): F_k(x,\gamma) \neq 0\}\subseteq S$, where $(m\times \mu)(S)<\infty$.\\
By Lemma \ref{r.k.h.s.-GT}, $G_\psi(L^2(G))$ is a r.k.H.s. with pointwise bounded kernel, so by \cite[Lemma 3.1]{Wil:00} each subspace of $G_\psi(L^2(G))$ consisting of functions that are non-zero on a set of finite measure must be of finite dimension. This is a contradiction.\\
So $G_\psi(L^2(G))\cap \{F\in L^2(G\times \widehat{G}) : F=\chi_M \cdot F\}=\{0\}$ for arbitrary set $M \subseteq G\times \widehat{G}$ of finite measure.  \\
Hence, the set $\{(x,\gamma):G_\psi f(x,\gamma) \neq 0\}$ has infinite measure.\\
Conversely, suppose that for an arbitrary function $f\in L^2(G)\setminus \{0\}$, the set $\{(x,\gamma): G_\psi f(x,\gamma) \neq 0\}$ has infinite measure.  \\
Let, if possible, $G_0$ is compact. Then, the quotient group $G/G_0$ is totally disconnected and therefore has a compact open subgroup $K$. \\
Let $\pi : G \rightarrow G/G_0$ be the natural homomorphism. Then $\pi$ is continuous and open and there exists a compact open subset $C$ of $G$ such that $\pi(C)=K$. So $G_1=\pi^{-1}(K)=CG_0$ is a compact open subgroup of $G$. \\
Let $m(G_1)=\alpha>0$. Then $m_{G_1}=\alpha^{-1}(m|_{G_1})$ is a Haar measure on $G_1$ for which $m_{G_1}(G_1)=1$. \\
Define $f=\chi_{G_1}$ and $\psi=\chi_{G_1}$. Then
\begin{flalign*}
\|f\|_2^2=\|\psi\|_2^2 =\int_{G}{|\chi_{G_1}(x)|^2}\ dm(x)=m(G_1)=\alpha.
\end{flalign*}
Also, using \cite[Lemma 23.19]{Hew:Ros:63}, we have
\begin{flalign*}
G_\psi f(x,\gamma)&=\int_{G}{\chi_{G_1}(y)\ \overline{\chi_{G_1}(x^{-1}y)}\ \gamma(y^{-1})}\ dm(y) &\\
&=\int_{G_1}{\overline{\chi_{G_1}(x^{-1}y)}\ \gamma(y^{-1})}\ \alpha\ dm_{G_1}(y) &\\
&=\chi_{G_1}(x)\int_{G_1}{\gamma(y^{-1})}\ \alpha\ dm_{G_1}(y) &\\
&=\alpha \chi_{G_1}(x)\ \chi_{A(G_1)}(\gamma). &
\end{flalign*}
Therefore, $\{(x,\gamma): G_\psi f(x,\gamma) \neq 0\}=G_1 \times A(G_1)$.\\
Since $G_1$ is compact and $m(G_1)>0$, so $G_1$ is not locally null.\\
By \cite[23.24 (d), (e)]{Hew:Ros:63}, $A(G_1)$ is compact open subgroup.
\begin{flalign*}
\text{So,}\ 0<(m\times \mu)(\{(x,\gamma): G_\psi f(x,\gamma) \neq 0\}) &=(m\times \mu)(G_1 \times A(G_1)) &\\
&=m(G_1)\ \mu(A(G_1))<\infty, 
\end{flalign*}
which is a contradiction to the hypothesis. \\
Hence, $G_0$ is non-compact.
\end{proof}
\section{QUP for certain group extensions}
\noindent Throughout this section $G$ will be a second countable, unimodular, locally compact group of type I and $\widehat{G}$ the dual space of $G$. If $f$ is a function on $G$ and $y \in G$, we denote by $f_y|H$ the function on $H$ defined by
\begin{flalign*}
(f_y|H)(h)=f(hy),\ \text{for all}\ h \in H.
\end{flalign*}
We now prove  the following theorem.
\begin{thm}\label{QUP-GT}
Let $H$ be a closed, normal subgroup of $G$ such that $G/H$ is compact. If $H$ has QUP for Gabor transform, then so does $G$.
\end{thm}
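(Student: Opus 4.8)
The plan is to climb from $H$ to $G$ along the fibration $H\to G\to G/H$ and apply the hypothesis coset by coset. Since $G$ is unimodular and $G/H$ is compact, $H$ is unimodular, $G/H$ carries a $G$-invariant measure $m_{G/H}$, and the conjugation action of $G$ on $H$ preserves the Haar measure $m_H$ (its modulus is a continuous homomorphism $G\to\R_{>0}$ which is trivial on $H$ and factors through the compact group $G/H$, hence trivial). Fix $f\in L^2(G)$ and a window $\psi$ on $G$ with $(m\times\mu)(E)<\infty$, $E=\{(x,\pi):G_\psi f(x,\pi)\ne0\}$, and write $E_x=\{\pi:G_\psi f(x,\pi)\ne0\}$. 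By Weil's formula $\|f\|_2^2=\int_{G/H}\|f_y|H\|_{L^2(H)}^2\,dm_{G/H}(\dot y)$, so $f_y|H\in L^2(H)$ for a.e.\ coset $\dot y$, and $f=0$ a.e.\ on $G$ as soon as $f_y|H=0$ a.e.\ on $H$ for a.e.\ $\dot y$. Thus it suffices, for a.e.\ coset $\dot y$, to produce a genuine window $\phi$ on $H$ with $(m_H\times\mu_{\widehat H})\bigl(\{(t,\sigma):G_\phi^H(f_y|H)(t,\sigma)\ne0\}\bigr)<\infty$; the hypothesis then forces $f_y|H=0$.

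I would build $\phi$ from $\psi$ using the identity $G_\psi f(x,\pi)=\widehat{f^\psi_x}(\pi)$ of section~2 together with its analogue on $H$. For an auxiliary point $x_0\in G$ set $\phi(h):=\psi(x_0^{-1}hy)$ for $h\in H$. A change of variables --- here invariance of $m_H$ under conjugation is used --- identifies $\|\phi\|_{L^2(H)}$ with the $L^2$-norm of $\psi$ restricted to the coset $x_0^{-1}yH$, so $\phi$ is a nonzero window precisely when $x_0^{-1}yH$ lies in the set $N\subseteq G/H$ of cosets on which $\psi$ is not a.e.\ zero; $N$ has positive measure since $\psi\ne0$. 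Because the translated window $h\mapsto\phi(t^{-1}h)$ equals $h\mapsto\psi\bigl((tx_0)^{-1}hy\bigr)$, a direct computation gives, for $t\in H$, $\sigma\in\widehat H$,
\[
G_\phi^H(f_y|H)(t,\sigma)=\widehat{(f^\psi_{tx_0})_y|H}(\sigma),
\]
so the slice of $\supp G_\phi^H(f_y|H)$ at a fixed $t$ is carried by the image in $\widehat H$ of $\supp\widehat{f^\psi_{tx_0}}=E_{tx_0}$ under restriction of representations from $G$ to $H$.

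To turn this into a finiteness statement I would use the following Fourier-side estimate: there is a constant $C=C_{G,H}$ such that, for every $F\in L^1(G)\cap L^2(G)$ and a.e.\ $y\in G$,
\[
\mu_{\widehat H}\bigl(\{\sigma:\widehat{F_y|H}(\sigma)\ne0\}\bigr)\le C\,\mu\bigl(\{\pi:\widehat F(\pi)\ne0\}\bigr).
\]
Applied with $F=f^\psi_{tx_0}$ (which lies in $L^1(G)\cap L^2(G)$ for almost every value of $tx_0$, by Plancherel and \eqref{GT-norm}), this gives $\mu_{\widehat H}(\{\sigma:\widehat{(f^\psi_{tx_0})_y|H}(\sigma)\ne0\})\le C\,\mu(E_{tx_0})$ for a.e.\ pair $(t,y)$. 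On the other hand Fubini gives $\int_G\mu(E_x)\,dm(x)=(m\times\mu)(E)<\infty$, which Weil's formula rewrites as $\int_{G/H}\bigl(\int_H\mu(E_{tx_0})\,dm_H(t)\bigr)dm_{G/H}(x_0H)<\infty$; hence the set $\mathcal C$ of cosets $x_0H$ with $\int_H\mu(E_{tx_0})\,dm_H(t)<\infty$ is conull. Fixing a coset $\dot y$ outside the relevant null sets, ``$\phi$ is a nonzero window'' means $x_0H\in\dot y\,N^{-1}$, a set of positive measure, which therefore meets $\mathcal C$; choosing $x_0$ in the intersection gives $(m_H\times\mu_{\widehat H})(\supp G_\phi^H(f_y|H))\le C\int_H\mu(E_{tx_0})\,dm_H(t)<\infty$. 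The hypothesis on $H$ then yields $f_y|H=0$ for a.e.\ $\dot y$, and the converse direction of Weil's formula gives $f=0$ a.e.

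The hard part will be the Fourier-side estimate, which bundles two facts from Mackey's normal-subgroup analysis. First, $\supp\widehat{F_y|H}\subseteq\{\sigma\in\widehat H:\sigma\prec\pi|_H\text{ for some }\pi\in\supp\widehat F\}$ for a.e.\ $y$: write $\widehat F(\pi)$ as an integral over $G/H$ of $(\pi|_H)(F_y|H)\,\pi(y)^\ast$, test against $\pi\prec\mathrm{Ind}_H^G\sigma$, and conclude from Frobenius reciprocity that if $\widehat F$ vanishes on every such $\pi$ then $\widehat{F_y|H}(\sigma)=0$ for a.e.\ $y$. Second, the Plancherel comparison $\mu_{\widehat H}(\{\sigma:\sigma\prec\pi|_H\text{ for some }\pi\in S\})\le C\,\mu(S)$: $\pi|_H$ is supported on a single $G$-orbit in $\widehat H$, compact because $G/H$ is, and the Plancherel measure of $\widehat G$ disintegrates over the orbit space $\widehat H/G$ compatibly with that of $\widehat H$, with uniformly bounded density by compactness of $G/H$. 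This is essentially the machinery behind the Fourier-transform extension theorem of Echterhoff--Kaniuth--Kumar \cite{Ech:Kan:Kum:91}, from which I would quote the estimate (or reprove it here); the residual measure-theoretic chores, such as passing from ``for each $\sigma$, $\widehat{F_y|H}(\sigma)=0$ a.e.'' to ``for a.e.\ $y$, $\widehat{F_y|H}$ vanishes off that set'', are routine given second countability of $\widehat H$.
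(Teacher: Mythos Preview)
Your proposal is correct and follows essentially the same route as the paper: reduce to cosets via Weil's formula, build an $H$-window by restricting $\psi$ to a coset (the paper's $({}_x\psi)_y|H$ is exactly your $\phi$ with $x_0=x$), invoke the Fourier-side comparison from \cite[Theorem~1.2]{Ech:Kan:Kum:91} (where in fact $C=1$) to bound the $\mu_{\widehat H}$-measure of the support of the $H$-Gabor transform slice by $\mu(E_{tx_0})$, integrate over $H$, and apply the QUP hypothesis on $H$. The paper's write-up is terser and leaves implicit both the nonvanishing of the induced $H$-window and the Fubini bookkeeping needed to swap the order of the ``a.e.\ $y$'' and ``for each $t$'' quantifiers, points which you handle explicitly; but the skeleton of the argument is the same.
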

\begin{proof}
Let $f \in L^2(G)$ and $\psi$ be a window function such that 
\begin{flalign*}
&&(m\times \mu)\{(x,\pi):G_\psi f(x,\pi)\neq 0\} &< \infty. &
\end{flalign*}
By Weil's formula, we obtain
\begin{flalign*}
&&\int_{G/H}\int_{H}\int_{\widehat{G}}{\chi_{\{(hx,\pi):G_\psi f(hx,\pi)\neq 0\}}(hx,\pi)}\ d\pi\ dh\ d\dot{x} &< \infty. &
\end{flalign*}
Therefore, there exists a zero set $K$ in $G$ such that for all $x\in G\setminus K$,
\begin{flalign}
&&\int_{H}\int_{\widehat{G}}{\chi_{\{(hx,\pi):G_\psi f(hx,\pi)\neq 0\}}(hx,\pi)}\ d\pi\ dh &< \infty. & \label{QUP-step1}
\end{flalign}
Fix $x\in G\setminus K$. For each $h \in H$, define
\begin{flalign*}
f_h^{({}_x\psi)} (y)=f(y)\ \overline{{}_x\psi(h^{-1}y)},\ \text{for all}\ y\in G.
\end{flalign*}
Then, $f_h^{({}_x\psi)} \in L^1(G)$ for all $h\in H$. \\
Also, for all $y\in G$, we observe that $f_{h}^{({}_x\psi)}=f_{hx}^\psi$.\\
Since $H$ is a closed unimodular subgroup of $G$, so by \cite[Theorem 1.2]{Ech:Kan:Kum:91} there exists a zero set $M_h$ in $G$ such that for every $y\in G\setminus M_h$ and every representation $\sigma$ of $H$, the function $\left(f_h^{({}_x\psi)}\right)_y|H \in L^1(H)$ and
\begin{flalign}
\mu_{\widehat{H}}\left(\left\{\sigma:\left(\left(f_{h}^{({}_x\psi)}\right)_y|H\right)\widehat{\ }(\sigma)\neq 0\right\}\right)\!\leq \! \mu\left(\left\{\pi:\left(f_{h}^{({}_x\psi)} \right)\widehat{\ }(\pi)\neq 0\right\}\right).\label{QUP-step2}
\end{flalign}
For all $k\in H$, we observe that $\left(\left(f_h^{({}_x\psi)}\right)_y|H\right)(k)=\left(f_y|H\right)_h^{(({}_x\psi)_y|H)}(k)$.
\begin{flalign}
\text{We have,}\ &\mu_{\widehat{H}}\left(\left\{\sigma:\left(\left(f_{h}^{({}_x\psi)}\right)_y|H\right)\widehat{\ }(\sigma)\neq 0\right\}\right) \nonumber&\\*
&=\mu_{\widehat{H}}\left(\left\{\sigma:\left(\left(f_y|H\right)_h^{(({}_x\psi)_y|H)}\right)\widehat{\ }(\sigma)\neq 0\right\}\right) \nonumber&\\
&=\int_{\widehat{H}}{\chi_{\left\{\sigma:G_{(({}_x\psi)_y|H)}\left(f_y|H\right)(h,\sigma)\neq 0\right\}}(\sigma)}\ d\sigma &\label{QUP-step3}
\end{flalign}
\begin{flalign}
\text{and}\ \mu\left(\left\{\pi:\left(f_{h}^{({}_x\psi)} \right)\widehat{\ }(\pi)\neq 0\right\}\right) &=\mu\left(\left\{\pi:\left(f_{hx}^\psi\right)\widehat{\ }(\pi)\neq 0\right\}\right)\nonumber&\\*
&=\int_{\widehat{G}}{\chi_{\left\{\pi:G_\psi f(hx,\pi)\neq 0\right\}}(\pi)}\ d\pi. &\label{QUP-step4}
\end{flalign}
From \eqref{QUP-step2}, \eqref{QUP-step3} and \eqref{QUP-step4}, we obtain
\begin{flalign*}
&&\int_{\widehat{H}}{\chi_{\left\{\sigma:G_{(({}_x\psi)_y|H)}\left(f_y|H\right)(h,\sigma)\neq 0\right\}}(\sigma)}\ d\sigma&\leq \int_{\widehat{G}}{\chi_{\left\{\pi:G_\psi f(hx,\pi)\neq 0\right\}}(\pi)}\ d\pi,  &
\end{flalign*}
for all $h\in H$ and $y\in G\setminus M_h$. Integrating both sides with respect to $h$, we get
\begin{flalign*}
\int_{H}\int_{\widehat{H}}{\chi_{\left\{\sigma:G_{(({}_x\psi)_y|H)}\left(f_y|H\right)(h,\sigma)\neq 0\right\}}(\sigma)}\ d\sigma\ dh&\leq \int_{H}\int_{\widehat{G}}{\chi_{\left\{\pi:G_\psi f(hx,\pi)\neq 0\right\}}(\pi)}\ d\pi\ dh, 
\end{flalign*}
for all $y\in G \setminus M$, where $M=\displaystyle\bigcup_{h\in H}{M_h}$. It implies
\begin{flalign*}
&\int_{H}\int_{\widehat{H}}{\chi_{\left\{(h,\sigma):G_{(({}_x\psi)_y|H)}\left(f_y|H\right)(h,\sigma)\neq 0\right\}}(h,\sigma)}\ d\sigma\ dh&\\
&\leq \int_{H}\int_{\widehat{G}}{\chi_{\left\{(hx,\pi):G_\psi f(hx,\pi)\neq 0\right\}}(hx,\pi)}\ d\pi\ dh   &\\
&<\infty. & [\text{Using \eqref{QUP-step1}}]
\end{flalign*}
Therefore, we have
\begin{flalign*}
(m_H \times \mu_H)\left(\left\{(h,\sigma):G_{(({}_x\psi)_y|H)}\left(f_y|H\right)(h,\sigma)\neq 0\right\}\right)<\infty,
\end{flalign*}
for all $y\in G \setminus M$. Since $H$ has QUP for Gabor transform, we see that $f_y|H=0$ a.e. for all $y\in G \setminus M$. Hence, by Weil's formula, $f=0$ a.e.
\end{proof}
\begin{rem}
Let $G$ contain an abelian, normal subgroup $H$ such that $G/H$ is compact and $H_0$ is non-compact, then $G$ satisfies QUP for Gabor transform. In particular, QUP for Gabor transform holds for Lie groups which are Moore group with non-compact identity component. 
\end{rem}
\begin{rem}
By Theorem \ref{QUP-GT}, QUP for Gabor transform holds for Euclidean motion group $SO(n) \ltimes~\R^n$. In fact, it holds for all the groups of the form $K \ltimes \R^n$, where $K$ is compact group. 
\end{rem} 
\begin{rem}
It can be seen easily that QUP for Gabor transform does not hold when $G$ is compact or discrete, by taking $f=\psi=\chi_G$ or $f=\psi=\chi_{\{e\}}$ respectively.
\end{rem}
\section{Weak QUP for Gabor transform}
\noindent Throughout this section, we shall assume that $G$ is a compact group. We shall normalize the Haar measure $m$ on $G$ so that $m(G)=1$.  We shall establish the necessary and sufficient condition for a weaker form of QUP for Gabor transform. We have the following result:
\begin{thm}
The following conditions are equivalent:
\renewcommand{\labelenumi}{(\roman{enumi})}
\begin{enumerate}
\item If $f\in L^2(G)$ and $\psi$ is a window function satisfying 
\begin{flalign*}
(m\times \mu)(\{(x,\gamma): G_\psi f(x,\gamma)\neq 0\}) <1,
\end{flalign*}
then $f=0$ a.e.
\item $G/G_0$ is abelian.
\end{enumerate}
\end{thm}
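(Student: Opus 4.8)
The plan is to prove the two implications separately.

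\textbf{If $G/G_0$ is not abelian, then (i) fails.} The group $G/G_0$ is compact and totally disconnected, hence profinite, so non-abelianity furnishes an open normal subgroup of $G/G_0$ whose quotient is finite and non-abelian; its preimage $N$ in $G$ is an open normal subgroup with $G_0\subseteq N$ and $G/N$ finite non-abelian. I would put $n=[G:N]$ (so $m(N)=1/n$) and take $f=\psi=\chi_N$. Using $\int_N\pi(y)\,dm(y)=m(N)P_\pi$, where $P_\pi$ is the orthogonal projection of $\ch_\pi$ onto its $\pi(N)$-fixed subspace — a $\pi(G)$-invariant, hence (by irreducibility of $\pi$ and normality of $N$) trivial or full, subspace — one gets $G_\psi f(x,\pi)=m(N)P_\pi$ for $x\in N$ and $G_\psi f(x,\pi)=0$ for $x\notin N$, with $P_\pi\neq0$ precisely when $\pi$ is trivial on $N$. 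Thus $\{(x,\pi):G_\psi f(x,\pi)\neq0\}=N\times\widehat{G/N}$, so $(m\times\mu)(\{(x,\pi):G_\psi f(x,\pi)\neq0\})=\frac1n\sum_{\rho\in\widehat{G/N}}d_\rho$, which is strictly less than $\frac1n\sum_{\rho\in\widehat{G/N}}d_\rho^2=1$ because $G/N$ is non-abelian. Since $f\neq0$, (i) fails.

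\textbf{If $G/G_0$ is abelian, then (i) holds.} The core will be the uncertainty estimate: \emph{if $G/G_0$ is abelian and $g\in L^2(G)$ agrees a.e.\ with a non-zero finite linear combination of matrix coefficients, with $S=\{\pi\in\widehat G:\widehat g(\pi)\neq0\}$, then $m(\{g\neq0\})\,\mu(S)\geq1$.} Granting this, write $A_x=\{\pi:\widehat{f^\psi_x}(\pi)\neq0\}$, so that the $(m\times\mu)$-measure of $\{(x,\pi):G_\psi f(x,\pi)\neq0\}$ equals $\int_G\mu(A_x)\,dm(x)$, and suppose this is $<1$. Then $\mu(A_x)<\infty$ a.e., so for a.e.\ $x$ the function $f^\psi_x\in L^1(G)$ agrees a.e.\ with a finite combination of matrix coefficients; since $\{y:f^\psi_x(y)\neq0\}=A_f\cap xA_\psi$, the estimate gives $\mu(A_x)\geq m(A_f\cap xA_\psi)^{-1}$ whenever $f^\psi_x\neq0$. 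With $E=\{x:f^\psi_x\neq0\}$ and $h(x)=m(A_f\cap xA_\psi)$, Fubini gives $\int_E h\,dm=m(A_f)\,m(A_\psi)$, and $h\leq\min(m(A_f),m(A_\psi))$ forces $m(E)\geq\max(m(A_f),m(A_\psi))\geq\sqrt{m(A_f)\,m(A_\psi)}$; hence by Cauchy--Schwarz
\begin{align*}
\int_G\mu(A_x)\,dm(x)&\geq\int_E\frac{dm(x)}{h(x)}\geq\frac{m(E)^2}{\int_E h\,dm}\\
&=\frac{m(E)^2}{m(A_f)\,m(A_\psi)}\geq1,
\end{align*}
a contradiction; therefore $f=0$.

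To prove the estimate I would first reduce to the case where $G$ is a compact Lie group with $G/G_0$ finite: $g$ factors through $G/N$ for $N=\bigcap_{\pi\in S}\ker\pi$, and $G/N$ embeds in $\prod_{\pi\in S}U(d_\pi)$, hence is a compact Lie group, with $(G/N)/(G/N)_0$ a finite abelian quotient of $G/G_0$; this passage changes neither $m(\{g\neq0\})$ nor $\mu(S)$. In the reduced setting the restriction of $g$ to any coset of $G_0$ is, after translation, a finite combination of matrix coefficients of the \emph{connected} compact Lie group $G_0$, hence real-analytic, so on each coset $g$ vanishes identically or is non-zero a.e.; letting $J$ be the set of cosets on which $g\not\equiv0$, this gives $m(\{g\neq0\})=|J|/[G:G_0]$, so it remains to show $\mu(S)\geq[G:G_0]/|J|$. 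For this I would use the finite-group uncertainty principle on $A=G/G_0$: multiplication by a character $\chi\in\widehat A$ (a one-dimensional representation of $G$ trivial on $G_0$) is a unitary of $L^2(G)$ that replaces $S$ by $\{\sigma\otimes\chi:\sigma\in S\}$ and so preserves $\mu(S)$, while the $[G:G_0]$ functions $\{\chi g\}_{\chi\in\widehat A}$ span exactly the $|J|$-dimensional space of elements of $L^2(G)$ supported on the cosets in $J$. When $G=A\times G_0$ this reads off at once: writing $g(a,h)=\sum_\chi\chi(a)g_\chi(h)$ and $X=\{\chi:g_\chi\neq0\}$, the scalar Donoho--Stark inequality (applied to $a\mapsto g(a,h_0)$ for a suitable $h_0$) gives $|X||J|\geq[G:G_0]$, whence $\mu(S)\geq|X|\geq[G:G_0]/|J|$.

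I expect this last step to be the main obstacle: carrying it out for a general compact Lie group $G$ with $G/G_0$ finite abelian (rather than a direct product) requires feeding in the Clifford theory describing how an irreducible $\pi\in S$ decomposes on $G_0$. By contrast, the reduction to compact Lie groups, the analyticity of matrix coefficients on $G_0$, and the integration estimate above are all routine.
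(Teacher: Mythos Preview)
Your argument for $\neg(\text{ii})\Rightarrow\neg(\text{i})$ is essentially identical to the paper's: take $f=\psi=\chi_N$ for an open normal $N$ with $G/N$ finite non-abelian, compute $G_\psi f$, and use $\sum_{\rho\in\widehat{G/N}} d_\rho<\sum d_\rho^2=|G/N|$.

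For $(\text{ii})\Rightarrow(\text{i})$ there is a genuine gap, and you have correctly located it yourself. The inequality $m(\{g\neq0\})\,\mu(\{\pi:\widehat g(\pi)\neq0\})\geq1$ for nonzero $g$ on a compact group with $G/G_0$ abelian is precisely Kutyniok's weak QUP \cite{Kut:03}, and your proof of it stops at the split case $G\cong(G/G_0)\times G_0$. The paper does not prove this inequality either: it invokes \cite[Lemmas 2.2 and 2.3]{Kut:03} together with Matolcsi--Sz\"ucs on the finite abelian quotient $G/G_0$. Your reduction to a compact Lie quotient and your real-analyticity argument on $G_0$-cosets are essentially the content of those lemmas, so you are rederiving the right things; but the final descent from the compact Lie group $G$ to the finite abelian group $G/G_0$ (your inequality $\mu(S)\geq[G:G_0]/|J|$) is not established in general. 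You should either cite Kutyniok outright, as the paper does, or carry out the Clifford-theoretic bookkeeping you allude to.

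Once the per-point estimate $\mu(A_x)\geq 1/m(A_f\cap xA_\psi)$ is available, your Cauchy--Schwarz integration is actually cleaner than the paper's endgame. The paper supposes $f_x^\psi\neq0$ for \emph{all} $x$, obtains $\mu(A_x)\geq1$ everywhere, integrates to get $(m\times\mu)(\cdots)\geq1$, and then from the failure of that supposition tries to conclude $f=0$ via the phrase ``since $\psi$ is arbitrary'' --- but $\psi$ is fixed by the hypothesis, so this step is not justified as written. Your argument with $h(x)=m(A_f\cap xA_\psi)$, $\int_E h=m(A_f)m(A_\psi)$, $m(E)\geq\max\bigl(m(A_f),m(A_\psi)\bigr)$, and $\int_E h^{-1}\geq m(E)^2/\int_E h\geq1$ yields $\int_G\mu(A_x)\,dm\geq1$ directly whenever $f,\psi\neq0$, without any assumption on the size of $E=\{x:f_x^\psi\neq0\}$, and thereby closes that loophole.
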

\begin{proof}
\textbf{(i) \boldmath$\Rightarrow$\unboldmath\ (ii)}: Suppose on the contrary that $G/G_0$ is non-abelian. \\
Since $G/G_0$ is totally disconnected, there exists an open normal subgroup $C$ of $G/G_0$ such that $(G/G_0)/C$ is non-abelian. \\
Let $H$ be the pre-image of $C$ in $G$. Then $G/H$ is finite and non-abelian.\\
We define $f=\chi_H$ and $\psi=\chi_H$. Then $f,\psi \in L^1(G) \cap L^2(G)$ and
\begin{flalign}
G_\psi f(x,\gamma) &=\int_{G}{\chi_H(y)\ \overline{\chi_H(x^{-1}y)}\ \gamma(y^{-1})}\ dm(y)\nonumber&\\*
&=\int_{H}{\overline{\chi_H(x^{-1}y)}\ \gamma(y^{-1})}\ dm(y) \nonumber&\\*
&=\left\{ \begin{array}{cl}
\widehat{f}(\gamma), & \mbox{if}\ x\in H \\[10pt] 
0, & \mbox{if}\ x \notin H
\end{array}\right.. &\label{WQUP-step1}
\end{flalign}
We define a function $g \in L^1(G/H)$ as $g=\chi_{\{H\}}$. \\
Then $f(x)=g(xH)$ for all $x \in G$. By \cite[Lemma 2.1]{Kut:03}, for $\gamma \in \widehat{G}$ and $\xi,\eta \in \ch_\gamma$, we have
\begin{flalign}
&&\langle{\widehat{f}(\gamma)\xi,\eta}\rangle &=\chi_{A(H,\widehat{G})}(\gamma)\ \langle{\widehat{g}(\gamma)\xi,\eta}\rangle. &\label{WQUP-step2}
\end{flalign} 
From \eqref{WQUP-step1} and \eqref{WQUP-step2}, we obtain
\begin{flalign*}
\langle{G_\psi f(x,\gamma)\xi,\eta}\rangle &=\left\{ \begin{array}{cl}
\chi_{A(H,\widehat{G})}(\gamma)\ \langle{\widehat{g}(\gamma)\xi,\eta}\rangle, & \mbox{if}\ x\in H \\[10pt] 
0, & \mbox{if}\ x \notin H
\end{array}\right. &\\
&=\left\{ \begin{array}{cl}
\displaystyle\sum_{yH \in G/H}{\chi_{\{H\}}(yH)\ \langle{\gamma(y^{-1})\xi,\eta}\rangle}, & \mbox{if}\ x\in H, \gamma \in A(H,\widehat{G}) \\[10pt] 
0, & \mbox{otherwise}
\end{array}\right. &\\
&=\left\{ \begin{array}{cl}
\langle{1_{\ch_\gamma}\xi,\eta}\rangle, & \mbox{if}\ x\in H, \gamma \in A(H,\widehat{G}) \\[10pt] 
0, & \mbox{otherwise}
\end{array}\right., &
\end{flalign*} 
which implies
\begin{flalign}
&(m\times \mu)(\{(x,\gamma): G_\psi f(x,\gamma)\neq 0\}) \nonumber&\\
&=(m\times \mu)(\{(x,\gamma): x \in H,\gamma\in A(H,\widehat{G})\}) \nonumber&\\
&=m(H)\ \mu(A(H,\widehat{G})) &\label{WQUP-step3}
\end{flalign}
Since $m(G)=1$, we have 
\begin{flalign}
m(H)=[G:H]^{-1}.  \label{WQUP-step4}
\end{flalign}
Also $G/H$ is non-abelian, there exists at least one $\gamma\in \widehat{G/H}$ such that $d_\gamma >1$. Since $H$ is a closed normal subgroup of $G$, by using \cite[Corollary 28.10]{Hew:Ros:63} we can identify $A(H,\widehat{G})$ with $\widehat{G/H}$. \\
As $G/H$ is finite group, by definition of Plancherel measure and \cite[Proposition 5.27]{Fol:94}, we have
\begin{flalign}
&&\mu(A(H,\widehat{G}))&=\mu(\widehat{G/H})=\sum_{\gamma\in \widehat{G/H}}{d_\gamma}<\sum_{\gamma\in \widehat{G/H}}{d_\gamma^2} =[G:H] & \label{WQUP-step5}
\end{flalign}
Combining \eqref{WQUP-step3}, \eqref{WQUP-step4} and \eqref{WQUP-step5}, we obtain 
\begin{flalign*}
&&(m\times \mu)(\{(x,\gamma): G_\psi f(x,\gamma)\neq 0\})&<1, &
\end{flalign*}
which is a contradiction to (i). Hence $G/G_0$ is abelian.\\
\textbf{(ii) \boldmath$\Rightarrow$\unboldmath\ (i)}: Suppose that $G/G_0$ is abelian. \\
Let $f \in L^2(G)$ and $\psi \in L^2(G)\setminus \{0\}$ be such that
\begin{flalign}
&& (m\times \mu)(\{(x,\gamma):G_\psi f(x,\gamma) \neq 0\})&<1. &\label{WQUP-step6}
\end{flalign}
For each $x \in G$, define $f_x^\psi$ as in \eqref{function}, then $f_x^\psi \in L^1(G)$. \\
Suppose that $f_x^\psi \neq 0$ for all $x\in G$. By \cite[Lemma 2.3]{Kut:03}, there exists a closed normal subgroup $H_x$ of $G$ such that $G/H_x$ is Lie and a function $g_x\in L^1(G/H)$ such that
\begin{flalign}
&& m\left(A_{f_x^\psi}\right)\ \mu\left(B_{f_x^\psi}\right)&=m_{G/H}(A_{g_x})\ \mu_{G/H}(B_{g_x}). &\label{WQUP-step7}
\end{flalign}
We note that $G/G_0H=(G/H)/(G_0H/H)$.\\
Since $G_0H/H$ is connected and open in $G/H$, we have $G_0H/H=(G/H)_0$. \\
By hypothesis, $G/G_0$ is abelian, so is $(G/H)/(G/H)_0$. \\
Thus, we can assume that $G$ is a compact Lie group. By \cite[Lemma 2.2]{Kut:03}, there exists a function $h_x\in L^1(G/G_0)$, $h_x \neq 0$ such that
\begin{flalign}
&& m\left(A_{f_x^\psi}\right)\ \mu\left(B_{f_x^\psi}\right)&\geq m_{G/G_0}(A_{h_x})\ \mu_{G/G_0}(B_{h_x}). &\label{WQUP-step8}
\end{flalign}
Since $G/G_0$ is abelian and $h_x\in L^1(G/G_0)$, $h_x \neq 0$, so by \cite{Mat:Szu:73} we have
\begin{flalign}
&&m_{G/G_0}(A_{h_x})\ \mu_{G/G_0}(B_{h_x})&\geq 1. &\label{WQUP-step9}
\end{flalign}
Combining \eqref{WQUP-step8} and \eqref{WQUP-step9}, we obtain
\begin{flalign*}
&& 1 &\leq m\left(A_{f_x^\psi}\right)\ \mu\left(B_{f_x^\psi}\right) &\\
&&&\leq m(G)\ \mu(\{\gamma: \widehat{f_x^\psi}(\gamma) \neq 0\}) &\\
&&&= \mu(\{\gamma: G_\psi f(x,\gamma) \neq 0\}), &
\end{flalign*}
for all $x \in G$. On integrating both sides with respect to $x$, we get
\begin{flalign*}
&&1&\leq \int_{G}{\mu(\{\gamma: G_\psi f(x,\gamma) \neq 0\})}\ dm(x) &\\
&&&= \int_{G}\int_{\widehat{G}}{\chi_{\{\gamma: G_\psi f(x,\gamma) \neq 0\}}(\gamma)}\ dm(x)\ d\mu(\gamma) &\\
&&&= \int_{G}\int_{\widehat{G}}{\chi_{\{(x,\gamma): G_\psi f(x,\gamma) \neq 0\}}(x,\gamma)}\ dm(x)\ d\mu(\gamma) &\\
&&&=(m\times \mu)(\{(x,\gamma): G_\psi f(x,\gamma)\neq 0\}), &
\end{flalign*}
which is a contradiction to \eqref{WQUP-step6}. So there exists $x \in G$ such that $f_x^\psi (y)=0$ for almost all $y \in G$. Since $\psi \in L^2(G)\setminus \{0\}$ is arbitrary. Thus $f=0$ a.e. 
\end{proof}
\section*{Acknowledgements}
\noindent The second author was supported by R \& D grant of University of Delhi. 
\begin{bibdiv}
\begin{biblist}
\bib{Ban:Kum:15(2)}{article}{
title={Heisenberg Uncertainty Inequality for Gabor Transform},
author={Bansal, A.},
author={Kumar, A.},
journal={arXiv preprint arXiv:1507.00446}
}
\bib{Ben:85}{article}{
title={On Fourier transforms of functions supported on sets of finite Lebesgue measure},
author={Benedicks, M.},
journal={Journal of mathematical analysis and applications},
volume={106},
number={1},
pages={180-183},
year={1985},
publisher={Elsevier}
}
\bib{Far:Kam:12}{article}{
title={Continuous Gabor transform for a class of non-Abelian groups},
author={Farashahi, A. G.},
author={Kamyabi-Gol, R.},
year={2012},
volume={19},
pages={683-701},
journal={Bull. Belg. Math. Soc. Simon Stevin}
}
\bib{Fol:94}{book}{
title={A Course in Abstract Harmonic Analysis},
author={Folland, G.B.},
year={1994},
publisher={CRC Press}
}
\bib{Fol:Sit:97}{article}{
title={The uncertainty principle: a mathematical survey},
author={Folland, G. B.},
author={Sitaram, A.},
journal={Journal of Fourier Analysis and Applications},
volume={3},
number={3},
pages={207-238},
year={1997},
publisher={Springer}
}
\bib{Hew:Ros:63}{book}{
title={Abstract Harmonic Analysis I},
author={Hewitt, E.},
author={Ross, K.A.},
year={1963},
publisher={Springer-Verlag}
}
\bib{Hog:93}{article}{
title={A qualitative uncertainty principle for unimodular groups of type I},
author={Hogan, J.A.},
year={1993},
volume={340},
pages={587-594},
number={2},
journal={Trans. Amer. Math. Soc.}
}
\bib{Ech:Kan:Kum:91}{article}{
title={A qualitative uncertainty principle for certain locally compact groups},
author={Echterhoff, S.},
author={Kaniuth, E.},
author={Kumar, A.},
volume={3},
number={3},
pages={355-370},
year={1991},
journal={Forum Math.}
}
\bib{Kut:03}{article}{
title={A weak qualitative uncertainty principle for compact groups},
author={Kutyniok, G.},
journal={Illinois Journal of Mathematics},
volume={47},
number={3},
pages={709-724},
year={2003},
publisher={University of Illinois at Urbana-Champaign, Department of Mathematics}
}
\bib{Mat:Szu:73}{article}{
title={Intersection des mesures spectrales conjug\'ees},
author={Matolcsi, T.},
author={Sz{\"u}cs, J.},
journal={C. R. Acad. Sci. Paris},
volume={277},
pages={A841-A843},
year={1973}
}
\bib{Wil:00}{article}{
title={New uncertainty principles for the continuous Gabor transform and the continuous wavelet transform},
author={Wilczok, E.},
journal={Documenta Mathematica},
volume={5},
pages={201-226},
year={2000},
publisher={Universi{\"a}t Bielefeld, Fakult{\"a}t f{\"u}r Mathematik}
}
\end{biblist}
\end{bibdiv}
\end{document}